\newcolumntype{Y}{>{\centering\arraybackslash}X}
\DeclareSymbolFont{cyrletters}{OT2}{wncyr}{m}{n}
\DeclareMathSymbol{\Sha}{\mathalpha}{cyrletters}{"58}
\newcommand{\genlegendre}[4]{%
  \genfrac{(}{)}{}{#1}{#3}{#4}%
  \if\relax\detokenize{#2}\relax\else_{\!#2}\fi
}
\newcommand{\legendre}[3][]{\genlegendre{}{#1}{#2}{#3}}
\newtheorem{thm}{Theorem}[section]
\newtheorem{cor}[thm]{Corollary}
\newtheorem{lem}[thm]{Lemma}
\newcommand{\BIG}{\bBigg@{2}}
\newcommand{\vast}{\bBigg@{3}}
\newcommand{\Vast}{\bBigg@{5}}
\numberwithin{equation}{section}
\begin{document}
\setlength{\arrayrulewidth}{0.1mm}



\title[Heron triangle and an Elliptic curves]{Construction of an infinite family of elliptic curves of $2$-Selmer rank $1$ from Heron triangles}

\author[Debopam Chakraborty]{Debopam Chakraborty}
\address{Department of Mathematics\\ BITS-Pilani, Hyderabad campus\\
Hyderabad, INDIA}
\email{debopam@hyderabad.bits-pilani.ac.in}

\author[Vinodkumar Ghale]{Vinodkumar Ghale}
\address{Department of Mathematics\\ BITS-Pilani, Hyderabad campus\\
Hyderabad, INDIA}
\email{p20180465@hyderabad.bits-pilani.ac.in}

\author[Anupam Saikia]{Anupam Saikia}
\address{Department of Mathematics and Computation\\ IIT Guwahati\\
Guwahati, INDIA}
\email{a.saikia@iitg.ac.in}

\date{}

\subjclass[2010]{Primary 11G05, 11G07; Secondary 51M04}
\keywords{Elliptic curve; Selmer group; Heron triangle}

\maketitle

\section*{Abstract}

\noindent Given any positive integer $n$, it is well known that there always exist triangles with rational sides $a,b$ and $c$ such that the area of the triangle is $n$. Assuming finiteness of the Shafarevich-Tate group, we first construct a family of infinitely many Heronian elliptic curves of rank exactly $1$ from Heron triangles of a certain type. We also explicitly produce a separate family of infinitely many Heronian elliptic curves with $2$-Selmer rank lying between $1$ and $3$.

\section{Introduction}
\noindent The congruent number problem is to determine which positive integers $n$ appear as the area of a right triangle with rational sides. The problem boils down to identifying the elliptic curve $E_{n}: y^{2} = x^{3} - n^{2}x$, known as the \textit{congruent number elliptic curve}, with positive rank. Currently, there is no algorithm to determine whether a given positive integer $n$ is congruent or not. The Birch and Swinnerton-Dyer conjecture \cite{Birch} predicts that $n$ should be a congruent number if $n \equiv 5$, $6$ or $7 \pmod 8$. Understanding the $2$-Selmer rank plays an important role in the rank computation of an elliptic curve. In \cite{Brown1} and \cite{Brown2}, D. R. Heath-Brown examined the size of the $2$-Semer group of the congruent elliptic curve $E_{n}$. 
A glimpse of the extensive studies on the congruent number problem can be found in \cite{Chahal}, \cite{Coates}, \cite{conrad} and \cite{Johnstone}.

\noindent An immediate generalization of the congruent number problem is the existence of positive integer $n$ as the area of triangles with rational sides without the constraint of being a right angle triangle. Such triangles are called Heron triangles. In \cite{Goins}, Goins and Maddox have proved that the existence of a Heron triangle of area $n$ is equivalent to the existence of rational points of order greater than $2$ on an associated elliptic curve (given by (\ref{maddox})).  Elliptic curves associated with Heron triangles appear in the works of various authors. For a given integer $n$, the existence of infinitely many Heron triangles with the area $n$ has been studied in \cite{Rusin}. Buchholz and Rathbun \cite{Buchholz} proved the existence of infinitely many Heron triangles with two rational medians. Later in \cite{Buchholz2}, Buchholz and Stingley looked into the existence of Heron triangles with three rational medians. In \cite{dujella}, Dujella and Peral have shown the existence of elliptic curves of higher ranks associated with Heron triangles. In \cite{Halbeisen}, Halbeisen and Hungerbühler have shown the existence of elliptic curves of rank at least two associated with Heron triangles. In a recent work of Ghale et al. \cite{Ghale}, the authors constructed a family of elliptic curves of rank at most one from a certain Diophantine equation via Heron triangle.

\noindent In this article, we consider two primes $p$ and $q$ such that $p \equiv 7 \pmod 8$ and $q = 4^{m}+1$. We examine the group structure of elliptic curves associated with Heron triangles of area $2^{m}p$ having one of the angles as $\theta$ such that $\tau = \tan\frac{\theta}{2} = 2^{m}$. We prove the following theorem.

\begin{thm}\label{mainthm}
Let $p$ be a prime congruent to $7$ modulo $8$ and $q=4^m+1$ be a prime such that $\legendre{p}{q} = 1$. Then the $2$-Selmer rank of the elliptic curve  
\begin{equation}
    \label{ell1}
E : y^{2} = x(x-4^{m}p)(x+p)
\end{equation} is $1$ when $m=1$. In case $m \geq 2$, the $2$-Selmer rank of $E$ lies between $1$ and $3$.  
\end{thm}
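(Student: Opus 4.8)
The plan is to run a complete $2$-descent, exploiting the fact that $E$ has full rational $2$-torsion $\{O,(0,0),(4^{m}p,0),(-p,0)\}$. Writing the roots as $e_1=0,\ e_2=4^{m}p,\ e_3=-p$, I would use the standard injection
\[
E(\mathbb{Q})/2E(\mathbb{Q})\hookrightarrow \mathbb{Q}^{*}/(\mathbb{Q}^{*})^{2}\times\mathbb{Q}^{*}/(\mathbb{Q}^{*})^{2},\qquad (x,y)\mapsto(x-e_1,\,x-e_2),
\]
and attach to each candidate pair $(b_1,b_2)$ the homogeneous space
\[
C_{(b_1,b_2)}:\quad b_1u^{2}-b_2v^{2}=4^{m}p,\qquad b_1u^{2}-b_1b_2w^{2}=-p.
\]
Since $\Delta(E)=16\cdot 4^{2m}p^{6}q^{2}$, the only bad primes are $2,p,q$, so every Selmer class is represented by a pair in $\langle -1,2,p,q\rangle\times\langle -1,2,p,q\rangle\cong\mathbb{F}_2^{8}$, and local solubility need only be checked at $v\in\{\infty,2,p,q\}$. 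Here I read ``$2$-Selmer rank'' as $\dim_{\mathbb{F}_2}\mathrm{Sel}^{(2)}(E/\mathbb{Q})$ minus the dimension $2$ of the rational $2$-torsion image, so the claim is that $\dim_{\mathbb{F}_2}\mathrm{Sel}^{(2)}(E/\mathbb{Q})=3$ for $m=1$ and lies in $\{3,4,5\}$ for $m\geq 2$.

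First I would record the torsion images, $(0,0)\mapsto(-1,-p)$, $(4^{m}p,0)\mapsto(p,q)$, $(-p,0)\mapsto(-p,-pq)$ (using that $4^{m}$ is a square), giving a distinguished $2$-dimensional subspace of the Selmer group. For the upper bound I would then compute place by place the local image $W_v=\mathrm{Im}\big(E(\mathbb{Q}_v)/2E(\mathbb{Q}_v)\big)$. Because the $2$-torsion is everywhere rational, the local formula $\#E(\mathbb{Q}_v)/2E(\mathbb{Q}_v)=\#E(\mathbb{Q}_v)[2]\cdot|2|_v^{-1}$ fixes the dimensions at $\dim W_\infty=1,\ \dim W_2=3,\ \dim W_p=\dim W_q=2$.

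The decisive input is the explicit description of $W_p$ and $W_q$ via residue symbols, where the hypotheses enter: $p\equiv 7\pmod 8$ gives $\legendre{-1}{p}=-1$ and $\legendre{2}{p}=1$, while $\legendre{p}{q}=1$ with $q\equiv 1\pmod 4$ yields $\legendre{q}{p}=1$ by reciprocity. Translating each $C_{(b_1,b_2)}$ into Hilbert-symbol conditions at $p$ and $q$, intersecting with the sign condition at $\infty$ and the $2$-adic condition, cuts the eight-dimensional ambient space down to $\mathrm{Sel}^{(2)}(E/\mathbb{Q})$; for $m=1$ this intersection should come out exactly $3$-dimensional. The lower bound $\dim\geq 3$ in all cases — equivalently $2$-Selmer rank at least $1$ — I would obtain from the Heron triangle of area $2^{m}p$ with $\tan(\theta/2)=2^{m}$: via the Goins–Maddox correspondence this furnishes a rational point on $E$ that one checks to be of infinite order, so $\operatorname{rank}E(\mathbb{Q})\geq 1$. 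Combined with the upper bounds (and, for the ``rank exactly $1$'' corollary, with finiteness of $\Sha$, which forces $\dim\Sha[2]$ even), this pins the theorem.

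The main obstacle is the place $v=2$. The right-hand sides $4^{m}p$ and $-p$ of $C_{(b_1,b_2)}$ have $2$-adic valuations $2m$ and $0$, so the $2$-adic solubility genuinely depends on $m$. For $m=1$ one has $q=5\equiv 5\pmod 8$, whence $\legendre{2}{q}=-1$, and the combined conditions are rigid enough to force the Selmer dimension to be exactly $3$. For $m\geq 2$, however, $q=2^{2m}+1\equiv 1\pmod 8$ gives $\legendre{2}{q}=1$, and the growing valuation $2m$ of $4^{m}p$ makes $W_2$ far less constraining; the best bound extractable uniformly in $m$ is $3\leq\dim_{\mathbb{F}_2}\mathrm{Sel}^{(2)}(E/\mathbb{Q})\leq 5$. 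Getting the $2$-adic image correct in this regime — rather than the residue-field computations at $p$ and $q$, which are routine — is where the real work lies.
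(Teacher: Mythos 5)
Your upper-bound strategy is essentially the paper's argument in different clothing: both reduce to checking local solubility of the homogeneous spaces $b_1z_1^2-b_2z_2^2=4^mp$, $b_1z_1^2-b_1b_2z_3^2=-p$ at the places $\infty,2,p,q$, and both use exactly the residue-symbol inputs you identify ($\legendre{-1}{p}=-1$, $\legendre{2}{p}=1$, $\legendre{q}{p}=1$ via reciprocity, and $\legendre{2}{5}=-1$ versus $\legendre{2}{q}=1$ for $m\geq 2$, which is precisely why the $m=1$ case closes and the $m\geq 2$ case does not). Your formulation via the local images $W_v$ and their dimensions is a legitimate repackaging, though you leave the decisive intersection computation as a claim rather than carrying it out; the paper does the equivalent work explicitly in Lemmas \ref{rank-lem2}--\ref{Sha-lem3}, eliminating all candidate pairs outside the torsion image except $(1,q)$ (and, for $m\geq2$, possibly $(2,2)$ and $(2,2q)$).

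The genuine gap is your lower bound. You propose to deduce $s(E)\geq 1$ from a rational point of infinite order furnished by ``the Heron triangle of area $2^mp$ with $\tan(\theta/2)=2^m$.'' No such triangle is known to exist at this stage: while every positive integer is the area of \emph{some} Heron triangle, imposing the additional condition $\tan(\theta/2)=2^m$ is exactly equivalent, via the Goins--Maddox correspondence, to the existence of a rational point of order greater than $2$ on $E$. So your lower-bound argument assumes the conclusion; indeed the paper derives the existence of these triangles as Corollary \ref{pc}, \emph{after} (and conditionally on finiteness of $\Sha$) establishing the rank statement. The paper's actual route to $s(E)\geq 1$ is Lemma \ref{SHA-lem5}: the pair $(1,q)$ is everywhere locally soluble (Hasse--Weil plus Hensel for $l\geq 5$, and explicit choices at $l=2,3$), hence lies in $\mbox{Sel}_2(E/\mathbb{Q})$, and it is visibly not in the image $A$ of the $2$-torsion. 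A Selmer class needs no rational point behind it, which is precisely why this argument is not circular. You would need to replace your Heron-triangle step with such a local-solubility verification (or an explicit non-torsion point found by other means) for the proof to stand.
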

\noindent Here, for an odd prime $l$ and an integer $a$ coprime to $l$, $\legendre{a}{l}$ denotes the Legendre symbol of $a$ modulo $l$. We deduce the following corollary under the assumption of the finiteness of the Shafarevich-Tate group. 
\begin{cor}
\label{pc}
 The Mordell-Weil rank of the the elliptic curve $E$ given by (\ref{ell1}) is at most $1$ when $m=1$. Moreover, if we assume the finiteness of the Shafarevich-Tate group $\Sha(E/\mathbb{Q})$, then the rank of $E(\mathbb{Q})$ is exactly $1$ and the $2$-part of $\Sha(E/\mathbb{Q})$ is trivial.   
\end{cor}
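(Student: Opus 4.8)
The plan is to deduce the corollary from the Main Theorem purely by a dimension count attached to the standard $2$-descent exact sequence
\[
0 \longrightarrow E(\mathbb{Q})/2E(\mathbb{Q}) \longrightarrow \operatorname{Sel}_2(E/\mathbb{Q}) \longrightarrow \Sha(E/\mathbb{Q})[2] \longrightarrow 0.
\]
First I would record that the cubic $x(x-4^{m}p)(x+p)$ splits completely over $\mathbb{Q}$, so $E$ has full rational $2$-torsion and $\dim_{\mathbb{F}_2} E(\mathbb{Q})[2] = 2$. Since $\dim_{\mathbb{F}_2} T/2T = \dim_{\mathbb{F}_2} T[2]$ for every finite abelian group $T$, the torsion contributes exactly $2$ to $\dim_{\mathbb{F}_2} E(\mathbb{Q})/2E(\mathbb{Q})$, and writing $r$ for the Mordell--Weil rank one gets $\dim_{\mathbb{F}_2} E(\mathbb{Q})/2E(\mathbb{Q}) = r + 2$, independently of any higher $2$-power torsion.

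Next I would feed in the Main Theorem. Taking $\mathbb{F}_2$-dimensions in the exact sequence yields
\[
\dim_{\mathbb{F}_2}\operatorname{Sel}_2(E/\mathbb{Q}) = r + 2 + \dim_{\mathbb{F}_2}\Sha(E/\mathbb{Q})[2],
\]
so the $2$-Selmer rank, understood as the left-hand side with the contribution of $E(\mathbb{Q})[2]$ subtracted, equals $r + \dim_{\mathbb{F}_2}\Sha(E/\mathbb{Q})[2]$. For $m=1$ the Main Theorem asserts this quantity is $1$, whence $r + \dim_{\mathbb{F}_2}\Sha(E/\mathbb{Q})[2] = 1$. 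As both summands are non-negative integers, this already forces $r \leq 1$, giving the unconditional half of the corollary.

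For the conditional half I would invoke the finiteness of $\Sha(E/\mathbb{Q})$. Under this hypothesis the Cassels--Tate pairing is a non-degenerate pairing on the finite group $\Sha(E/\mathbb{Q})$, and for an elliptic curve it is alternating; hence the $2$-primary part decomposes orthogonally into hyperbolic pieces, so that $\dim_{\mathbb{F}_2}\Sha(E/\mathbb{Q})[2]$ is even. Combining this parity with $r + \dim_{\mathbb{F}_2}\Sha(E/\mathbb{Q})[2] = 1$ leaves only the possibility $\dim_{\mathbb{F}_2}\Sha(E/\mathbb{Q})[2] = 0$ and $r = 1$. Finally $\Sha(E/\mathbb{Q})[2] = 0$ forces the entire $2$-primary subgroup $\Sha(E/\mathbb{Q})[2^{\infty}]$ to vanish, which is the claimed triviality of the $2$-part of $\Sha(E/\mathbb{Q})$.

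The only delicate point is the parity statement $\dim_{\mathbb{F}_2}\Sha(E/\mathbb{Q})[2] \equiv 0 \pmod 2$: this is precisely where finiteness of $\Sha$ enters, through both the non-degeneracy and the \emph{alternating} character of the Cassels--Tate pairing. I expect this to be the main obstacle to present cleanly, since it is the alternating (rather than merely antisymmetric) property that guarantees even dimension and thereby collapses the two cases $(r,\dim\Sha[2]) \in \{(1,0),(0,1)\}$ to the single case $(1,0)$; the remainder of the argument is formal bookkeeping with the descent sequence.
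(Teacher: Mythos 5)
Your proposal is correct and follows essentially the same route as the paper: read off $r+\dim_{\mathbb{F}_2}\Sha(E/\mathbb{Q})[2]=s(E)=1$ from the descent exact sequence to get $r\leq 1$ unconditionally, then use the Cassels--Tate pairing under finiteness of $\Sha$ to rule out the case $(r,\dim\Sha[2])=(0,1)$. If anything, your treatment of the last step is slightly more careful than the paper's, which only cites "square order" of $\Sha$ — by itself that would not exclude $\dim_{\mathbb{F}_2}\Sha[2]=1$ (e.g.\ $\mathbb{Z}/4\mathbb{Z}$ has square order), whereas your appeal to the alternating, non-degenerate pairing forcing $\Sha\cong H\times H$, hence even $\dim_{\mathbb{F}_2}\Sha[2]$, is the correct justification.
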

\noindent The corollary guarantees the existence of infinitely many Heron triangles with area $2p$ and one angle $\theta$ such that $\tau = \tan \frac{\theta}{2} = 2$, since for $q = 5$ we have infinitely primes $p \equiv 7 \pmod 8$ such that $\legendre{p}{5} = 1$ by Dirichlet's theorem on primes in arithmetic progression. 

\section{The $2$-Selmer Group}

\noindent 
We begin this section by recalling the association of a Heron triangle with an elliptic curve. In \cite{Goins}, Goins and Maddox have shown that any triangle $\Delta_{\tau,n}$ of area $n\in \mathbb{Z}$ with rational sides $a,b,c$ and an angle $\theta$ is associated with the elliptic curve 
\begin{equation}
\label{maddox}E_{\Delta_{\tau, n}}: \;\;y^{2} = x(x - n \tau) (x + n \tau^{-1}),
\end{equation}
where $\tau$ denotes $\tan\frac{\theta}{2}$. Moreover, they have shown that the torsion group of $E_{\Delta_{\tau, n}}(\mathbb{Q})$ will be either $\mathbb{Z}/ 2 \mathbb{Z} \times \mathbb{Z}/ 2 \mathbb{Z}$ or $\mathbb{Z}/ 2\mathbb{Z} \times \mathbb{Z}/ 4\mathbb{Z}$, the triangle being isosceles in the latter case. One can identify the elliptic curve given by (\ref{ell1})
with a Heron triangle of area $2^{m}p$ and an angle $\theta$ such that $\tau = \tan \frac{\theta}{2} = 2^{m}$. 

\noindent By the method of 2-descent (see \cite{Silverman}, Section $X.1.4$), there exists an injective homomorphism
$$ b: E(\mathbb{Q})/2E({\mathbb{Q}}) \longrightarrow \mathbb{Q}(S,2) \times \mathbb{Q}(S,2)$$
 defined by 
\begin{align*}
b(x,y) = \begin{cases}
(x, x - 4^{m}p)  & \text{if} \text{  } x\neq 0,4^{m}p, \\
(-1,-p) & \text{if} \text{  } x = 0,\\
(p,q) & \text{if}\text{  } x =4^{m}p, \\
(1,1) &\text{if} \text{  } \text{  } P = \mathcal{O},
\end{cases}
\end{align*}
where 
\begin{align}
\label{pairs}
\mathbb{Q}(S,2)&=\left\{b\in\mathbb{Q}^*/(\mathbb{Q}^*)^2 :
\text{ord}_l(b)\equiv 0 ~(\bmod \text{ } {2}) ~ \text{for all finite primes} ~ l\neq 2, p,q\right\}\\
&=\left\{\pm 1,\; \pm 2, \;\pm p, \;\pm q, \;\pm 2p, \;\pm 2q, \;\pm pq, \;\pm 2pq \right\}.\nonumber
\end{align}
Moreover, if $(b_1, b_2) \in \mathbb{Q}(S,2) \times \mathbb{Q}(S,2)$ is a pair that is not in the image of one of the three points $\mathcal{O} , (0,0), (4^{m}p,0)$, then $(b_1,b_2)$ is the image of a point $ P = (x,y) \in E(\mathbb{Q})/2E(\mathbb{Q})$ if and only if the equations
\begin{align}
& b_1z_1^2 - b_2z_2^2 = 4^{m}p, \label{eq22}\\
& b_1z_1^2 - b_1b_2z_3^2 = -p, \label{eq23}
\end{align}
have a solution $(z_1,z_2,z_3) \in \mathbb{Q}^* \times \mathbb{Q}^* \times \mathbb{Q}$. These equations represent a homogeneous space for $E$ (\cite{Silverman}) . 

\noindent The image of $E(\mathbb{Q})/2E({\mathbb{Q}})$ under the $2$-descent map is contained in a subgroup of $\mathbb{Q}(S,2)\times \mathbb{Q}(S,2)$ known as the $2$-Selmer group $\mbox{Sel}_2(E/\mathbb{Q})$, which fits into an exact sequence (see Chapter X, \cite{Silverman})
\begin{equation}
\label{selmer}
0 \longrightarrow E (\mathbb{Q})/2 E(\mathbb{Q}) \longrightarrow \mbox{Sel}_2 (E/\mathbb{Q}) \longrightarrow \Sha(E/\mathbb{Q})[2]\longrightarrow 0.
\end{equation}
The elements in $\mbox{Sel}_2(E/\mathbb{Q})$ correspond to the pairs $(a,b)\in \mathbb{Q}(S,2)^2$ such that the system of equations (\ref{eq22}) and (\ref{eq23}) have non-trivial local solutions in $\mathbb{Q}_{l}$ at all primes $l$ of $\mathbb{Q}$ including infinity. Note that $\# \,E(\mathbb{Q})/2 E(\mathbb{Q})=2^{2+r(E)}$. It is customary to denote $\#  \,\mbox{Sel}_2 (E/\mathbb{Q}) = 2^{2+s(E)}$, and refer to $s(E)$ as the $2$-Selmer rank. Clearly, we have
\begin{equation}
\label{ranks}
0 \leq r(E) \leq  s(E).
\end{equation}
From the inequality above as well as the exact sequence (\ref{selmer}), it can be easily seen that the $2$-Selmer rank controls both $E(\mathbb{Q})$ and $\Sha(E/\mathbb{Q})[2]$.  
\section{Local Solutions for the Homogeneous Spaces} 
\noindent In this section, we examine the properties of the $l$-adic solutions for the equations (\ref{eq22}) and (\ref{eq23}) that are associated with the $2$-Selmer group. In a later section, we use these properties to bound the size of the $2$-Selmer group. We use the well-known fact that any $l$-adic number $a$ can be written as $a = l^{n} \cdot u \text{ where } n \in \mathbb{Z}$, $u \in \mathbb{Z}_{l}^{*}$. We first prove the following result for all odd primes $l$.  
\begin{lem}\label{SHA-lem1}
Suppose the equations (\ref{eq22}) and (\ref{eq23}) have a solution $(z_1,z_2,z_3) \in \mathbb{Q}_{l} \times \mathbb{Q}_{l} \times \mathbb{Q}_{l}$ for any odd prime $l$. If $v_{l}(z_{i}) < 0$ for any one $i \in \{1,2\}$, then $v_{l}(z_{1})=v_l(z_2)=v_l(z_3)=-k < 0$ for some integer $k$. 
\end{lem}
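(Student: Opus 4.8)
The plan is to argue entirely in terms of $l$-adic valuations, exploiting the non-archimedean (strong) triangle inequality together with the fact that the right-hand sides are integers. Write $\alpha = v_l(z_1)$, $\beta = v_l(z_2)$, $\gamma = v_l(z_3)$. Two structural facts drive everything: first, $4^m p$ and $-p$ are integers, so $v_l(4^m p) \ge 0$ and $v_l(-p) \ge 0$; second, the representatives $b_1, b_2 \in \mathbb{Q}(S,2)$ listed in (\ref{pairs}) are square-free, so $v_l(b_i) \in \{0,1\}$ when $l \in \{p,q\}$ and $v_l(b_i) = 0$ for every other odd prime. In all cases $0 \le v_l(b_i) \le 1$, which is the only input about $b_1,b_2$ I will need.

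First I would treat (\ref{eq22}). Suppose $\alpha < 0$ (the case $\beta < 0$ is symmetric in this equation, and the argument will in fact show that each of $\alpha,\beta$ being negative forces the other). Since $\alpha \le -1$, the summand $b_1 z_1^2$ has valuation $v_l(b_1) + 2\alpha \le 1 + 2(-1) = -1$, strictly smaller than $v_l(4^m p) \ge 0$. If the two summands $b_1 z_1^2$ and $b_2 z_2^2$ had distinct valuations, the strong triangle inequality would give $v_l(b_1 z_1^2 - b_2 z_2^2) = \min\bigl(v_l(b_1 z_1^2),\, v_l(b_2 z_2^2)\bigr) \le -1$, contradicting $v_l(4^m p) \ge 0$. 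Hence they must share the same valuation, i.e. $v_l(b_1) + 2\alpha = v_l(b_2) + 2\beta$. Thus $v_l(b_1) - v_l(b_2) = 2(\beta - \alpha)$ is even; but this difference lies in $\{-1,0,1\}$, so it must vanish, yielding $v_l(b_1) = v_l(b_2)$ and $\alpha = \beta < 0$.

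Next I would run the same argument on (\ref{eq23}), now using that $v_l(b_1 z_1^2) \le -1 < 0 \le v_l(-p)$. The identical triangle-inequality dichotomy forces $b_1 z_1^2$ and $b_1 b_2 z_3^2$ to have equal valuation, that is $v_l(b_1) + 2\alpha = v_l(b_1) + v_l(b_2) + 2\gamma$, so $v_l(b_2) = 2(\alpha - \gamma)$ is even and hence zero, giving $\alpha = \gamma$. Combining the two steps yields $\alpha = \beta = \gamma = -k$ with $k > 0$, which is exactly the assertion of the lemma.

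The valuation bookkeeping is routine; the one place that genuinely requires care — and the main obstacle — is the special primes $l \in \{p,q\}$, where neither $v_l(b_i)$ nor the valuation of the right-hand side need vanish, so the quick reasoning available for the remaining odd primes (where every relevant term has valuation $0$ and (\ref{eq22})–(\ref{eq23}) read off $\alpha=\beta=\gamma$ almost immediately) no longer applies. The device that rescues these primes is the parity observation above: because $z_i^2$ always contributes an even valuation while $v_l(b_i) \in \{0,1\}$, any forced coincidence of valuations simultaneously pins down the $b_i$-valuations (equal, resp. zero) and the exponents $\alpha,\beta,\gamma$. I would therefore present all odd primes under this single parity-based argument rather than splitting into separate cases for $l = p$, $l = q$, and $l \neq p,q$.
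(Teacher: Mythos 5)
Your proof is correct. It reaches the same two intermediate conclusions as the paper --- first $v_l(z_1)=v_l(z_2)<0$ from (\ref{eq22}), then $v_l(z_3)=v_l(z_1)$ from (\ref{eq23}) --- but by a genuinely different mechanism. The paper proceeds by explicit case analysis on the relative sizes of the $k_i=v_l(z_i)$: each wrong ordering forces $l^2$ to divide $b_1$, $b_2$, or $b_1b_2$; the first two are immediately absurd by square-freeness, but $l^2\mid b_1b_2$ is not, so the paper must split further into $l=p$ (disposed of by a valuation count showing $p^3\mid b_1b_2$) and $l=q$ (disposed of by subtracting (\ref{eq23}) from (\ref{eq22}) to obtain the auxiliary relation $b_1b_2z_3^2-b_2z_2^2=pq$ and running yet another comparison). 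Your parity device replaces all of this: the ultrametric inequality forces the two dominant terms to have equal valuation, that equality says $v_l(b_1)-v_l(b_2)$ (respectively $v_l(b_2)$) equals twice an integer, and square-freeness then pins it to zero --- one uniform argument for every odd $l$, with no special treatment of $p$ or $q$ and no auxiliary equation. What the paper's route buys is that its intermediate divisibility statements are reused almost verbatim in Lemma \ref{rank-lem2}$(iii)$; what yours buys is brevity and the elimination of the most delicate subcase. One presentational point to make explicit in a final write-up: when $v_l(b_2)=1$ your second step does not yield $\alpha=\gamma$ but rather shows that no solution with $v_l(z_1)<0$ exists at all (odd cannot equal even), so the lemma holds vacuously for such pairs; the deduction is sound but deserves a sentence.
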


\begin{proof}
Let $z_{i} = l^{k_{i}}u_{i}$, where $k_{i} \in \mathbb{Z}$ and $u_{i} \in \mathbb{Z}_{l}^{*}$ for $i = \{1,2,3\}$. Then $v_{l}(z_{i}) = k_{i}$ for all $i \in\{1,2,3\}$. 

\noindent Suppose $k_{1} < 0$. Then from equation (\ref{eq22}) one can get that $$b_{1}u_{1}^{2} - b_{2}u_{2}^{2}l^{2(k_{2}-k_{1})} = 4^{m}pl^{-2k_{1}}.$$ If $k_{2}>k_1$, then $l^{2}$ must divide $b_{1}$, a contradiction as $b_{1}$ is square-free. Hence $k_{2} \leq k_1<0$. Now if  $k_{2} < k_{1} < 0$ then again from equation (\ref{eq22}) we get $$b_{1}u_{1}^{2}l^{2(k_{1}-k_{2})} - b_{2}u_{2}^{2} = 4^{m}pl^{-2k_{2}},$$ which implies $l^{2}$ must divide $b_{2}$, a contradiction again. Hence if $k_{1} < 0$, then we have $k_{1} = k_{2} = -k < 0$ for some integer $k$. For $k_2<0$, one similarly gets $k_1=k_2=-k<0$.\\
From the equation (\ref{eq23}), we have $$b_{1}u_{1}^{2} - b_{1}b_{2}u_{3}^{2}l^{2(k_{3} - k_{1})} = -pl^{-2k_{1}}.$$ If $k_1<0$ and $k_3>k_1$, then $l^{2}$ must divide $b_{1}$, a contradiction as before. Hence $k_3\leq k_1<0$ if $k_1<0$. For $k_3 < k_{1} <0$, 
we can rewrite the above equation as 
\begin{equation}
\label{new1}
b_{1}u_{1}^{2}l^{2(k_{1} - k_{3})} - b_{1}b_{2}u_{3}^{2} = -pl^{-2k_{3}},
\end{equation} which implies $l^{2}$ must divide $b_{1}b_{2}$, i.e., $l = p$ or $q$ as $l$ is odd. If $l = p$, then from equation (\ref{new1}) we arrive at the contradiction that $p^{3}$ divides $b_{1}b_{2}$ whereas $b_1$ and $b_2$ are square-free. For $l=q$, we subtract the equation (\ref{eq23}) from the equation (\ref{eq22}) and observe that $$b_{1}b_{2}u_{3}^{2} - b_{2}u_{2}^{2}l^{2(k_{2} - k_{3})} = pql^{-2k_{3}}.$$
If $k_2>k_3$, we get a contradiction that $q^3$ divides $b_1b_2$. Therefore, $k_2 \leq k_3<0$ but then by the first part, $k_1 = k_2 \leq k_3$. Together, we obtain $k_1=k_2=k_3=-k<0$ for some integer $k$ if $k_1<0$ or $k_2<0$.
\end{proof}
\begin{lem}\label{SHA-lem2}
Suppose the equations (\ref{eq22}) and (\ref{eq23}) have a solution $(z_1,z_2,z_3) \in \big(\mathbb{Q}_{2}\big)^{3}$. If $b_1b_2 \equiv 2$ (mod $4$), then $v_{2}(z_{1})=v_2(z_2)=v_2(z_3)=-k < 0$. 
\end{lem}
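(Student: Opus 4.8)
The plan is to transplant the valuation bookkeeping of Lemma \ref{SHA-lem1} to the prime $2$, the essential new features being that squarefreeness now only yields $v_2(b_i)\in\{0,1\}$ (so the ``$l^2\nmid b_i$'' contradictions are unavailable) and that squares of $2$-adic units satisfy $u^2\equiv 1\pmod 8$ rather than being arbitrary units. First I would write $z_i=2^{k_i}u_i$ with $u_i\in\mathbb{Z}_2^*$ and $k_i=v_2(z_i)$, and record the two right-hand side valuations $v_2(4^m p)=2m\ge 2$ and $v_2(-p)=0$. The hypothesis $b_1b_2\equiv 2\pmod 4$ says exactly that $v_2(b_1b_2)=1$; since each $b_i$ is squarefree, this forces one of $v_2(b_1),v_2(b_2)$ to equal $1$ and the other $0$. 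In particular the cross term $b_1b_2z_3^2$ in (\ref{eq23}) always carries the \emph{odd} valuation $1+2k_3$, and this single parity fact is what drives the whole argument.

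The heart of the proof is equation (\ref{eq23}), whose right-hand side $-p$ is a $2$-adic unit of valuation $0$. I would split according to which of $b_1,b_2$ is the even factor. When $b_1$ is even, both $b_1z_1^2$ and $b_1b_2z_3^2$ have odd valuations $1+2k_1$ and $1+2k_3$; as their difference has the even valuation $0$, the two valuations must coincide, forcing $k_1=k_3$, and factoring out $b_1$ leaves $u_1^2-b_2u_3^2\in\mathbb{Z}_2$ with valuation $-1-2k_1\ge 0$, which pins $k_1=k_3\le -1<0$. When instead $b_2$ is even, the term $b_1z_1^2$ has even valuation $2k_1$ while $b_1b_2z_3^2$ has odd valuation $1+2k_3$, so matching the even valuation $0$ of the right-hand side forces $2k_1=0$; feeding this into (\ref{eq22}), whose right-hand side has valuation $2m\ge 2$, produces an even-versus-odd valuation clash that cannot be reconciled. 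Thus the second branch is excluded and any solution falls into the first, giving $k_1=k_3=-k<0$.

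It then remains to bring $k_2$ in line. Here it is cleanest to subtract (\ref{eq23}) from (\ref{eq22}) and use $4^m+1=q$ to obtain the auxiliary relation $b_2\bigl(b_1z_3^2-z_2^2\bigr)=pq$; combining the valuation $v_2(b_2)$ with the already-determined value of $k_3$ and performing a parity count on this equation alongside (\ref{eq22}) itself forces $v_2(z_2)=2k_2$ to agree with the common value $-k$, yielding $k_1=k_2=k_3=-k<0$ as claimed. Throughout one repeatedly invokes $u^2\equiv 1\pmod 8$ to control the valuations of the surviving unit combinations.

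I expect the main obstacle to be establishing \emph{strict} negativity rather than mere equality of the valuations. Unlike the odd-prime situation of Lemma \ref{SHA-lem1}, at the prime $2$ cancellation among unit squares (all congruent to $1$ modulo $8$) can raise valuations, so the negativity cannot be read off from a simple divisibility obstruction; it must instead be extracted from the conflict between the even valuation $0$ of the unit right-hand side $-p$ in (\ref{eq23}) and the forced odd valuation $1+2k_3$ of the cross term $b_1b_2z_3^2$. Keeping the two right-hand side valuations $2m$ and $0$ carefully separated, and treating the two parity branches uniformly, is where the delicacy lies.
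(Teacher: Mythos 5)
Your treatment of equation (\ref{eq23}) is correct and in places sharper than the paper's: writing $v_2(b_1)+v_2(b_2)=1$ and comparing parities of valuations does pin $k_1=k_3\le -1$ when $b_1$ is even, and your observation that the $b_2$-even branch admits no solution at all is also right (the paper instead shows $k_1<0$ there and appeals to its first part; both readings are consistent because that branch is empty, so the lemma holds vacuously on it).

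The gap is in your final step, the one that is supposed to bring $k_2$ in line. The auxiliary relation $b_2(b_1z_3^2-z_2^2)=pq$ does \emph{not} force $k_2=-k$: since $v_2(b_2)=0$ in the surviving branch, $v_2(b_1z_3^2)=1+2k_3\le -1$ is odd while $v_2(z_2^2)=2k_2$ is even, so the two terms can never cancel and $v_2(b_1z_3^2-z_2^2)=\min(1+2k_3,\,2k_2)\le -1$, which cannot equal $v_2(pq)=0$ for \emph{any} choice of $k_2$. The same happens if you run the parity count on (\ref{eq22}) directly: $\min(1+2k_1,\,2k_2)\le -1$ can never equal $2m\ge 2$. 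So the computation you describe yields an outright contradiction, not the equality $k_2=-k$ you assert it yields. (This does not make the lemma false --- an inconsistent hypothesis renders it vacuously true, and indeed the whole point of the lemma is to feed the non-existence statement of Lemma \ref{rank-lem2}(ii) --- but your stated derivation of $k_2=-k$ is not what the parity count gives.) To pin $k_2=k_1$ honestly, argue as the paper does in Lemma \ref{SHA-lem1}, via squarefreeness rather than exact matching of valuations: dividing (\ref{eq22}) by $2^{2k_1}$, if $k_2>k_1$ then both $4^mp\cdot 2^{-2k_1}$ and $b_2u_2^2\,2^{2(k_2-k_1)}$ are divisible by $4$, forcing $4\mid b_1$; if $k_2<k_1$ the symmetric manipulation forces $4\mid b_2$; both contradict squarefreeness, so $k_2=k_1$. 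Finally, note that none of the steps actually uses $u^2\equiv 1\pmod 8$; the valuation bookkeeping alone carries the argument.
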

\begin{proof} 
 Let $z_{i} = 2^{k_{i}}u_{i}$, where $k_{i} \in \mathbb{Z}$ and $u_{i} \in \mathbb{Z}_{2}^{*}$ for $i = \{1,2,3\}$. Then $v_{2}(z_{i}) = k_{i}$ for all $i = \{1,2,3\}$.\\ 
\noindent When $k_1<0$, the argument in the first part of the proof of Lemma \ref{SHA-lem1} also yields $k_1=k_2 =-k<0$ and $k_3\leq k_1$. From equation (\ref{new1}), we can also conclude that $k_1=k_3$ as $l^2\nmid b_1b_2$ for $l=2$ in this case.\\ 
\noindent If $k_1 > 0$, then $k_{3} \geq 0$ in equation (\ref{eq23}) implies $2$ divides $p$, a contradiction. If $k_{3} < 0< k_1$, then equation (\ref{eq23}) implies $b_{1}b_{2} \equiv 0 \pmod 4$, a contradiction again. \\
\noindent If $b_{1}$ is even, one can show that $k_{1} \neq 0$ by similar argument. If $b_{2}$ is even and $k_{1} = 0$, then $4$ divides $b_{1}$ by equation (\ref{eq22}), a contradiction if $k_{2} \geq 0$. But $k_{2} < 0$ contradicts the fact $k_{1} = 0$ by equation (\ref{eq22}). Hence the only possibility is $k_{1} = k_{2} = k_{3} = -k < 0$ for some integer $k$.
\end{proof}
\section{Bounding the Size of the $2$-Selmer Group}

\noindent In this section, we bound the size of the $2$-Selmer group of the Heronian elliptic curve given by (\ref{ell1}). The $2$-Selmer group $\mbox{Sel}_2 (E/\mathbb{Q})$ consists of those pairs $(b_1, b_2)$ in $\mathbb{Q}(S,2) \times \mathbb{Q}(S,2)$ for which the equation (\ref{eq22}) and (\ref{eq23}) have an $l$-adic solution at every place $l$ of $\mathbb{Q}$. 
We now limit the size of  $\mbox{Sel}_2 (E/\mathbb{Q})$ by ruling out local solutions for certain pairs $(b_{1},b_{2})$ by exploiting the results of the previous section.  
\begin{lem}\label{rank-lem2}
Let $(b_{1},b_{2}) \in \mathbb{Q}(S,2) \times \mathbb{Q}(S,2)$. Then\\
$(i)$ The corresponding homogeneous space will have no $l$-adic solution for the case $l = \infty$ if $b_{1}b_{2} < 0$.\\
$(ii)$ If $b_{1}b_{2}\equiv 2 \pmod 4$, the corresponding homogeneous space will not have $2$-adic solutions. \\
$(iii)$ If $b_{1} \equiv 0 \pmod q$, then the corresponding homogeneous space will not have any $q$-adic solution. 
\end{lem}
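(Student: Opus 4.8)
The plan is to prove each of the three parts by examining the homogeneous space equations (\ref{eq22}) and (\ref{eq23}) at the relevant place $l$ and deriving a contradiction from a sign or congruence obstruction. Throughout, I will use the structural result from Lemmas \ref{SHA-lem1} and \ref{SHA-lem2}: whenever a variable has negative valuation, all three variables share the same negative valuation $-k$, which lets me clear denominators and reduce to integral equations whose residues I can analyze.

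For part $(i)$, I would work over $\mathbb{R}=\mathbb{Q}_\infty$. The strategy is to show that $b_1b_2<0$ forces a sign contradiction. From (\ref{eq22}) I have $b_1 z_1^2 - b_2 z_2^2 = 4^m p > 0$, and from (\ref{eq23}) I have $b_1 z_1^2 - b_1 b_2 z_3^2 = -p < 0$. If $b_1b_2<0$, then in (\ref{eq23}) the term $-b_1 b_2 z_3^2$ has the same sign as $b_1$, so $b_1 z_1^2 - b_1 b_2 z_3^2 = b_1(z_1^2 + |b_2| z_3^2/1)\cdots$; more carefully, both terms on the left of (\ref{eq23}) carry the sign of $b_1$ (since $-b_1b_2>0$ means $-b_1b_2 z_3^2$ shares $b_1$'s sign), forcing the left side to have sign $\operatorname{sgn}(b_1)$, hence $b_1<0$. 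But then (\ref{eq22}) gives $b_1 z_1^2 - b_2 z_2^2<0$ if also $b_2>0$ (which follows from $b_1b_2<0$ and $b_1<0$), contradicting that it equals $4^m p>0$. I would write this out cleanly by casework on $\operatorname{sgn}(b_1)$.

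For part $(ii)$, I invoke Lemma \ref{SHA-lem2} directly: the hypothesis $b_1 b_2\equiv 2\pmod 4$ is exactly its hypothesis, so any $2$-adic solution must have $v_2(z_1)=v_2(z_2)=v_2(z_3)=-k<0$. Substituting $z_i=2^{-k}u_i$ with $u_i\in\mathbb{Z}_2^*$ into (\ref{eq22}) and (\ref{eq23}) and multiplying through by $2^{2k}$, the right-hand sides $4^m p\cdot 2^{2k}$ and $-p\cdot 2^{2k}$ become even (in fact divisible by $4$ since $k\geq 1$), while the left-hand sides reduce modulo $4$ to expressions in the units $u_i$ and the residues of $b_1,b_2$. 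Since $b_1b_2\equiv 2\pmod 4$, exactly one of $b_1,b_2$ is even; I would track the $2$-adic valuations of each term and show the resulting congruence modulo $4$ (or $8$) cannot hold, giving the obstruction. The arithmetic here is routine once the common valuation is fixed.

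For part $(iii)$, I would reduce (\ref{eq22}) and (\ref{eq23}) modulo $q$ under the assumption $q\mid b_1$. By Lemma \ref{SHA-lem1}, either all valuations $v_q(z_i)$ are $\geq 0$ or all equal a common negative value; in the integral case, reducing (\ref{eq23}) mod $q$ with $q\mid b_1$ gives $0\equiv -p\pmod q$, impossible since $q\nmid p$; I must also handle (\ref{eq22}), which gives $-b_2 z_2^2\equiv 4^m p\pmod q$, and here the hypothesis $\legendre{p}{q}=1$ together with $4^m\equiv -1\pmod q$ (since $q=4^m+1$) should be used to determine whether $4^m p$ is a square mod $q$, constraining $b_2$ and $z_2$. \textbf{The main obstacle} I anticipate is part $(iii)$: unlike the sign argument in $(i)$ or the clean valuation hypothesis in $(ii)$, here I must carefully combine the two equations modulo $q$, correctly use $4^m\equiv -1\pmod q$ and the quadratic-residue hypothesis $\legendre{p}{q}=1$, and separately dispose of the negative-valuation case via Lemma \ref{SHA-lem1} before concluding no $q$-adic solution exists.
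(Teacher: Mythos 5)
Your parts (i) and (ii) are essentially the paper's argument. In (i) the parenthetical claim that $-b_1b_2z_3^2$ ``shares $b_1$'s sign'' is false when $b_1<0$ (since $-b_1b_2>0$ that term is nonnegative regardless of $\operatorname{sgn}(b_1)$), but the casework on $\operatorname{sgn}(b_1)$ you promise does go through exactly as in the paper: $b_1>0,\,b_2<0$ makes the left side of (\ref{eq23}) positive while the right side is $-p<0$, and $b_1<0,\,b_2>0$ makes the left side of (\ref{eq22}) nonpositive while the right side is $4^mp>0$. Part (ii) matches the paper: Lemma \ref{SHA-lem2} gives the common negative valuation, and after clearing denominators one side of (\ref{eq22}) is odd and the other even.

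Part (iii) has a genuine gap. Lemma \ref{SHA-lem1} only applies when $v_q(z_1)<0$ or $v_q(z_2)<0$; it says nothing when $v_q(z_1),v_q(z_2)\geq 0$ but $v_q(z_3)<0$. So your dichotomy ``either all valuations are $\geq 0$ or all share a common negative value'' omits a third case, and it is precisely the case the paper must treat separately: there one reads off from (\ref{eq23}) that $q^2\mid b_1b_2$, hence $q\mid b_2$, and then (\ref{eq22}) forces $q\mid 4^mp$, a contradiction. Moreover, in the common-negative-valuation case your sketch does not close: after writing $z_i=u_iq^{-k}$, the reduction of (\ref{eq22}) modulo $q$ reads $-b_2u_2^2\equiv 0$, not $-b_2z_2^2\equiv 4^mp$, and the correct mechanism is a divisibility argument --- deduce $q\mid b_2$ (the paper does this by subtracting (\ref{eq23}) from (\ref{eq22}) to get $pq^{2k+1}$ on the right), then conclude from (\ref{eq23}) that $q^2\mid b_1$, contradicting square-freeness. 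The hypotheses $\legendre{p}{q}=1$ and $4^m\equiv -1\pmod q$ that you plan to invoke are not used in the paper's proof of (iii) and are not needed; the quadratic-residue route you anticipate does not produce the obstruction, which is purely one of valuations.
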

\begin{proof} $(i)$ Let the homogeneous space corresponding to $(b_{1},b_{2}) \in \mathbb{Q}(S,2) \times \mathbb{Q}(S,2)$ have real solutions. Then $b_{1} > 0$ and $b_{2} < 0$ implies $-p > 0$ in equations (\ref{eq23}), which is absurd. Similarly, $b_{1} < 0$ and $b_{2} > 0$ implies $4^{m}p < 0$ in equation (\ref{eq22}), which is absurd. Thus, the homogeneous space corresponding to $(b_{1},b_{2})$ have no $l$-adic solutions for $l = \infty$ if  $b_{1}b_{2}<0$. \\
\noindent $(ii)$ Let $b_{1}b_{2} \equiv 2 \pmod 4$. Suppose $b_{1}$ is even and $b_{2}$ is odd. Then $v_{2}(z_{1}) = v_{2}(z_{2}) = -k < 0$ for some integer $k$ from lemma (\ref{SHA-lem2}). This in turn implies 
\begin{equation}
    b_{1}u_{1}^{2} - b_{2}u_{2}^{2} = 4^{m}p\cdot 2^{2k} \implies b_{2} \equiv 0 \pmod 2,
\end{equation} a contradiction. The case when $b_1$ is odd follows similarly. \\
\noindent $(iii)$ Let us assume $b_{1} \equiv 0 \pmod q$. Then one of $v_{q}(z_{1})$ or $v_{q}(z_{3})$ has to be negative in equation (\ref{eq23}). If $v_{q}(z_{1}) < 0$, then from lemma (\ref{SHA-lem1}), we have  $v_{q}(z_{1}) = v_{q}(z_{2}) = v_{q}(z_{3}) = -k < 0$ for some integer $k$. Subtracting equation (\ref{eq23}) from the equation (\ref{eq22}), we get
\begin{equation}
    b_{1}b_{2}u_{3}^{2} - b_{2}u_{3}^{2} = pq^{2k+1},
\end{equation} where $z_{i} = u_{i}q^{i}$ for $u_{i} \in \mathbb{Z}_{q}^{*}$ and $i \in {1,2,3}$. This implies that $b_{2} \equiv 0 \pmod q$. From  equation (\ref{eq23}), one can now deduce that
\begin{equation}
    b_{1}u_{1}^{2} - b_{1}b_{2}u_{3}^{2} = -pq^{2k} \implies b_{1} \equiv 0 \pmod {q^2},
\end{equation} a contradiction. For the case $v_{q}(z_{3}) < 0$, if additionally one of $v_{q}(z_{i}) < 0$ for $i \in \{1,2\}$, we are back to the previous case due to lemma (\ref{SHA-lem1}).\\
\noindent So we now suppose $v_{q}(z_{3}) < 0$ and $v_{q}(z_{i}) \geq 0$ for $i \in \{1,2\}$. Then from equation (\ref{eq23}), one can immediately observe $b_{1}b_{2} \equiv 0 \pmod {q^{2}} \implies b_{2} \equiv 0 \pmod q$ too. From equation (\ref{eq22}), this in turn implies that $4^{m}p \equiv 0 \pmod q$, a contradiction again. Hence the result follows.
\end{proof}
\noindent With the help of the lemma (\ref{rank-lem2}) and noting that the image of the torsion points $(0,0), (4^{m},0), (-p,0)$ and $\mathcal{O}$ under the map $b$ is $$A = \{(-1,-p), (p,q), (-p,-pq), (1,1)\},$$ we can now solely focus on the seven pairs 
\begin{equation}
    \label{seven}
(1,p), \;(1,q),\; (1,pq), \;(2,2), \;(2,2p),\; (2,2q),\; (2,2pq).
\end{equation}

\noindent Every other pair $(b_{1},b_{2})$ will either belong to the same coset of one of those seven points in the torsion group $Im(b)/A$ or the corresponding homogeneous space will not yield local solutions for at least one prime $l \leq \infty$ by the lemma (\ref{rank-lem2}) and consequently will not have rational solutions either. \noindent The following result narrows down the possible pairs from seven to three.

\begin{lem}\label{Sha-lem4}
There are no $p$-adic solutions to the homogeneous spaces corresponding to $(1,p)$, $(1,pq)$, $(2,2p)$ and $(2,2pq)$. 
\end{lem}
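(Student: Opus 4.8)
The plan is to assume, for contradiction, that a solution $(z_1,z_2,z_3)\in(\mathbb{Q}_p)^3$ exists for each of the four pairs and to derive an impossible quadratic-residue condition modulo $p$, using that $p\equiv 7\pmod 8$ forces $\legendre{-1}{p}=-1$ and $\legendre{2}{p}=1$, hence $\legendre{-2}{p}=-1$. The four pairs share the crucial feature that $b_1\in\{1,2\}$ is a unit in $\mathbb{Z}_p$, while $v_p(b_2)=v_p(b_1b_2)=1$. I would therefore run all four cases through a single argument and only separate them at the final residue computation.

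First I would reduce to the case $z_1,z_2,z_3\in\mathbb{Z}_p$. If $v_p(z_1)<0$ or $v_p(z_2)<0$, then Lemma \ref{SHA-lem1} gives $v_p(z_1)=v_p(z_2)=v_p(z_3)=-k<0$; writing $z_i=p^{-k}u_i$ with $u_i\in\mathbb{Z}_p^\ast$ and clearing denominators in (\ref{eq22}) yields $b_1u_1^2-b_2u_2^2=4^m p^{2k+1}$, whose reduction modulo $p$ reads $b_1u_1^2\equiv 0$, impossible since $b_1$ and $u_1$ are units at $p$. If instead $z_1,z_2\in\mathbb{Z}_p$ but $v_p(z_3)<0$, then in (\ref{eq23}) the term $b_1b_2z_3^2$ has the odd negative valuation $1+2v_p(z_3)$ while $b_1z_1^2$ has even non-negative valuation, so the left-hand side has negative valuation, contradicting $v_p(-p)=1$. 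Hence all three coordinates lie in $\mathbb{Z}_p$.

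Next I would extract the residue conditions. Reducing (\ref{eq23}) modulo $p$ forces $z_1\equiv 0\pmod p$, so I set $z_1=pw_1$, substitute, and divide by $p$; modulo $p$ this leaves $(b_1b_2/p)z_3^2\equiv 1$, so $b_1b_2/p$ must be a square modulo $p$. The same substitution in (\ref{eq22}), after dividing by $p$, gives modulo $p$ the relation $(b_2/p)z_2^2\equiv -4^m$, and since $4^m$ is a nonzero square this forces $-(b_2/p)$ to be a square modulo $p$. Both $b_1b_2/p$ and $b_2/p$ are units at $p$ in every case: one computes $(b_1b_2/p,\,b_2/p)=(1,1),(q,q),(4,2),(4q,2q)$ for $(1,p),(1,pq),(2,2p),(2,2pq)$ respectively.

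Finally I would combine the two square conditions case by case. For $(1,p)$ the second condition already demands $\legendre{-1}{p}=1$, and for $(2,2p)$ it demands $\legendre{-2}{p}=1$, both of which fail. For $(1,pq)$ the conditions say $q$ and $-q$ are squares modulo $p$, so their ratio $-1$ is a square; for $(2,2pq)$ they say $q$ and $-2q$ are squares, so $-2$ is a square. In every case this contradicts $p\equiv 7\pmod 8$, so no $p$-adic solution exists. I expect the only genuine care-point to be the quadratic-residue bookkeeping that makes the factor $q$ cancel in the combined condition; the valuation analysis is routine once Lemma \ref{SHA-lem1} is available.
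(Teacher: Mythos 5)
Your proof is correct and follows essentially the same route as the paper's: Lemma \ref{SHA-lem1} to control negative valuations, then reduction modulo $p$ to extract quadratic-residue conditions that contradict $\legendre{-1}{p}=\legendre{-2}{p}=-1$ for $p\equiv 7\pmod 8$. The only real difference is organizational --- you treat all four pairs uniformly and combine the residue conditions from both equations so that the factor $q$ cancels, which disposes of $(1,pq)$ and $(2,2pq)$ without any appeal to $\legendre{p}{q}$, whereas the paper argues $(1,p)$ and $(2,2p)$ explicitly by a case split on $v_p(z_1)$ and declares the other two ``similar.''
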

\begin{proof} First we prove the result for the case $(b_{1},b_{2}) = (1,p)$. A very similar proof can also be carried out for the case $(b_{1},b_{2}) = (1,pq)$.\\
\noindent If $v_{p}(z_{1}) > 0$ then assuming $z_{1} = p \Tilde{z}_{1}$ one can get the following from equation (\ref{eq22}). 
\begin{equation}
    p^{2} \Tilde{z}_{1}^{2} - pz_{2}^{2} = 4^{m}p \implies -z_{2}^{2} \equiv 4^{m} \pmod p
\end{equation} which implies that $\big(\frac{-1}{p}\big) = 1$, a contradiction as $p \equiv 7 \pmod 8$.\\
Now $v_p(z_1)=0$ implies $v_{p}(z_{2}) < 0$ from equation (\ref{eq22}), which in turn implies $v_{p}(z_{1}) < 0$ from lemma (\ref{SHA-lem1}), a contradiction. \\
If $v_p(z_1)<0$ then from lemma (\ref{SHA-lem1}), $v_{p}(z_{1}) = v_{p}(z_{2}) = -k < 0$ for some integer $k$. Assuming $z_{i} = u_{i}p^{-k}$ for $i \in \{1,2\}$, equation (\ref{eq22}) yields $u_{1}^{2} - pu_{2}^{2} = 4^{m}p^{2k+1} \implies u_{1} \equiv 0 \pmod p$, a  contradiction. 

\noindent We now deal with the pairs $(2,2p)$, the argument being similar for $(2,2pq)$. From the equations (\ref{eq22}) one can see that  if $v_{p}(z_{2}) \geq 0 $ then $v_{p}(z_{1}) > 0$. Now $v_{p}(z_{1}) > 0$ implies $\big(\frac{-2}{p}\big) = 1$, a contradiction as $p \equiv 7 \pmod 8$. If $v_{p}(z_{2}) < 0$, then $v_{p}(z_{2}) = v_{p}(z_{1}) = -k < 0$ for some integer $k$. Assuming $z_{i} = u_{i}p^{k}$ where $u_{i} \in \mathbb{Z}_{p}^{*}$ for $i \in \{1,2,3\} $, one can now observe from equation $(\ref{eq22})$ that
\begin{equation}
    2u_{1}^{2} - 2pu_{2}^{2} = 4^{m}p^{2k+1} \implies u_{1} \equiv 0 \pmod p,
\end{equation} a contradiction again. Hence the result follows. 
\end{proof}
\noindent We can reduce the possibilities for $(b_1,b_2)$ further down from three to one if $2$ is not a quadratic residue modulo $q$, i.e., $q=5$. 
\begin{lem}\label{Sha-lem3}
Suppose $m=1$, i.e., $q=5$. Then the homogeneous spaces corresponding to $(b_{1},b_{2}) \in \{(2,2), (2,2q)\}$ will not have $q$-adic solution.  
\end{lem}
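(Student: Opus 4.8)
The plan is to exploit the single arithmetic fact that makes $q=5$ special: since $m=1$ we have $q=4^m+1=5$ and $4^mp=4p$, and $2$ is a quadratic non-residue modulo $5$. Because $\legendre{p}{q}=1$, the prime $p$ is a square modulo $5$, so $2p$ is also a non-residue modulo $5$; in particular no square in $\mathbb{Z}_5^{*}$ can be congruent to $2$ or to $2p$ modulo $5$. For each of the two pairs I would split the analysis according to the $5$-adic valuations of a putative solution $(z_1,z_2,z_3)\in(\mathbb{Q}_5)^3$, invoking Lemma \ref{SHA-lem1}: if any of $v_5(z_1),v_5(z_2)$ is negative then $v_5(z_1)=v_5(z_2)=v_5(z_3)=-k<0$, and otherwise all $z_i\in\mathbb{Z}_5$. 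In each branch I reduce the defining equations (\ref{eq22}) and (\ref{eq23}) modulo $5$ and force a unit square to equal $2$ or $2p$, contradicting the non-residue facts above.

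For $(b_1,b_2)=(2,2)$, equations (\ref{eq22}) and (\ref{eq23}) read $2z_1^2-2z_2^2=4p$ and $2z_1^2-4z_3^2=-p$. In the negative-valuation branch I write $z_i=u_i5^{-k}$ with $u_i\in\mathbb{Z}_5^{*}$ and $k>0$; clearing denominators in (\ref{eq23}) gives $2u_1^2-4u_3^2=-p\cdot5^{2k}\equiv0\pmod5$, i.e.\ $u_1^2\equiv2u_3^2\pmod5$, which is impossible since $2$ is a non-residue. In the branch $z_i\in\mathbb{Z}_5$ I would subtract (\ref{eq23}) from (\ref{eq22}) to obtain $4z_3^2-2z_2^2=5p$; reducing modulo $5$ gives $z_2^2\equiv2z_3^2\pmod5$, which forces $z_2\equiv z_3\equiv0\pmod5$. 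Feeding $z_2\equiv0$ back into (\ref{eq22}) then yields $z_1^2\equiv2p\pmod5$, again impossible.

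For $(b_1,b_2)=(2,2q)=(2,10)$ the equations become $2z_1^2-10z_2^2=4p$ and $2z_1^2-20z_3^2=-p$, and now $5\mid b_2$ simplifies matters. In the negative-valuation branch, clearing denominators in (\ref{eq22}) and reducing modulo $5$ kills the $10u_2^2$ term and leaves $2u_1^2\equiv0\pmod5$, contradicting $u_1\in\mathbb{Z}_5^{*}$. In the branch $z_i\in\mathbb{Z}_5$, the term $10z_2^2$ vanishes modulo $5$ and (\ref{eq22}) collapses directly to $z_1^2\equiv2p\pmod5$, which is again impossible. I expect the only real care to be the bookkeeping of the valuation cases, so that the $b_2$-divisible terms ($10z_2^2$, $20z_3^2$) and the right-hand sides genuinely vanish modulo $5$; once that is arranged, every branch terminates in a unit square forced to be a non-residue ($2$ or $2p$), and hence neither homogeneous space has a $5$-adic point.
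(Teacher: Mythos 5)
Your proof is correct and uses essentially the same argument as the paper: reduce the homogeneous space equations modulo $5$ and derive a contradiction from the fact that $2$ (and hence $2p$, since $\legendre{p}{5}=1$) is a quadratic non-residue modulo $5$. The only difference is that you explicitly dispose of the negative-valuation branch via Lemma \ref{SHA-lem1} before reducing modulo $5$, a bookkeeping step the paper's proof leaves implicit; this is a small but welcome extra bit of care rather than a different method.
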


\noindent \textbf{Proof:} 
First consider $(b_{1},b_{2}) = (2,2)$. Subtracting equation (\ref{eq23}) from equation (\ref{eq22}) we get
\begin{equation}
    4z_{3}^{2} - 2z_{2}^{2} = 5p.
\end{equation}
This implies either $z_{2} \equiv z_{3} \equiv 0 \pmod 5$ or $\big(\frac{2}{5}\big) = 1$, the latter being an immediate contradiction. If $z_{2} \equiv z_{3} \equiv 0 \pmod 5$, equation (\ref{eq23}) implies $2z_{1}^{2} \equiv -p \pmod 5$, which is a contradiction again as $\big(\frac{-p}{5}\big) = 1$ but $\big(\frac{2}{5}\big) = -1$. Hence the result follows for $(b_{1},b_{2}) = (2,2)$. For the case $(b_{1},b_{2}) = (2,2q)$ the result follows from equation (\ref{eq22})
\begin{equation}
    2z_{1}^{2} - 10 z_{2}^{2} = 4p \implies 2z_{1}^{2} \equiv 4p \pmod 5.
\end{equation}
This in turn implies $\big(\frac{2}{5}\big) = 1$, a contradiction. $\hfill \square$

\begin{lem}\label{SHA-lem5}
The equations (\ref{eq22}) and (\ref{eq23}) have a local solution in $\mathbb{Q}_{l}$ for every prime $l$ for $(b_1,b_2)=(1,q)$.
\end{lem}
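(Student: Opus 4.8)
The plan is to exhibit, for every place $l$ of $\mathbb{Q}$, an explicit point on the homogeneous space attached to $(b_1,b_2)=(1,q)$, i.e.\ a solution of
\[
z_1^2 - q z_2^2 = 4^{m}p, \qquad z_1^2 - q z_3^2 = -p
\]
in $\mathbb{Q}_l^{*}\times\mathbb{Q}_l^{*}\times\mathbb{Q}_l$. First I would record the arithmetic facts that drive everything: since $q=4^m+1$ we have $q-1=4^m$; since $q\equiv 1\pmod 4$ we get $\legendre{-1}{q}=1$, so with the hypothesis $\legendre{p}{q}=1$ also $\legendre{-p}{q}=1$; quadratic reciprocity together with $q\equiv1\pmod4$ gives $\legendre{q}{p}=\legendre{p}{q}=1$; and $p\equiv7\pmod8$ gives $-p\equiv1\pmod8$.

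The key observation is that the single point $(z_1,z_2,z_3)=(\sqrt{-p},\sqrt{-p},0)$ solves the system whenever $-p$ is a square in $\mathbb{Q}_l$: indeed $z_1^2-qz_2^2=-p-q(-p)=p(q-1)=4^m p$ and $z_1^2-qz_3^2=-p$. This disposes of $l=2$ and $l=q$ simultaneously, because $-p$ is a square in $\mathbb{Q}_2$ (as $-p\equiv1\pmod8$) and a square in $\mathbb{Q}_q$ (as $\legendre{-p}{q}=1$).

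For $l=p$ and $l=\infty$ I would instead use the difference of the two equations, which reads $z_3^2-z_2^2=p$. At $l=p$, take the $p$-adic units $z_2=\tfrac{1-p}{2}$ and $z_3=\tfrac{1+p}{2}$, so that $z_3^2-z_2^2=p$; the first equation then forces $z_1^2=qz_2^2+4^m p\equiv qz_2^2\pmod p$, a nonzero quadratic residue since $\legendre{q}{p}=1$, so Hensel's lemma produces $z_1\in\mathbb{Z}_p^{*}$ (and the second equation holds automatically because $z_3^2=z_2^2+p$). At $l=\infty$ every quantity is positive, so $z_2=1$, $z_1=\sqrt{q+4^m p}$, $z_3=\sqrt{1+p}$ is a real solution with $z_1,z_2\neq0$. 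Finally, for every prime $l\notin\{2,p,q\}$ both $b_1=1$ and $b_2=q$ are $l$-adic units, hence the class is unramified and the homogeneous space automatically acquires an $l$-adic point (equivalently, its reduction is a smooth genus-one curve over $\mathbb{F}_l$, which has an $\mathbb{F}_l$-point by the Weil bound, lifted by Hensel).

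I expect the $l=2$ step to be the main obstacle: a valuation and parity analysis modulo $8$ rules out any solution with $z_2,z_3$ both units, so the naive conic parametrisation used at the odd primes fails there. The purpose of the degenerate solution $z_3=0,\ z_1=z_2=\sqrt{-p}$ is precisely to bypass this parity obstruction, and it is available only because the hypothesis $p\equiv7\pmod8$ makes $-p$ a square in $\mathbb{Q}_2$.
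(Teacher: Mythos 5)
Your proposal is correct, and it takes a genuinely different route from the paper at the delicate primes. The paper treats all $l\geq 5$ (hence implicitly $l=p$ and $l=q$) by the Hasse--Weil bound on the homogeneous space, and then does $l=3$ and $l=2$ by choosing $z_2,z_3\in\{0,1\}$ and reading off congruences for $z_1^2$. You instead produce exact solutions at the bad places by exploiting the identity $q-1=4^m$: the point $(z_1,z_2,z_3)=(\sqrt{-p},\sqrt{-p},0)$ kills $l=2$ and $l=q$ at once, and the factorization $z_3^2-z_2^2=p$ handles $l=p$, reserving Hasse--Weil for primes of good reduction only. This buys you two real improvements. First, the paper's Hasse--Weil step is not justified at $l=p,q$, where the curve has bad reduction and the smooth-curve point count does not apply; your explicit $p$-adic and $q$-adic points close that gap. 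Second, the paper's $l=2$ computation, as written, demands $z_1^2=q+4^mp$ and $z_1^2=-p$ simultaneously, which is impossible as an exact system; your degenerate solution with $z_3=0$ and $z_1=z_2=\sqrt{-p}$ (using $-p\equiv 1\pmod 8$) is the correct repair, and your parity remark explains why no choice with $z_2,z_3$ both units could work. The one point you gloss over is that at primes of good reduction the Weil bound gives a point of the projective model, and one must still argue (as the paper does by excluding $z_1\equiv z_2\equiv 0$ and $z_1\equiv z_3\equiv 0$, or by perturbing along the one-dimensional $\mathbb{Q}_l$-analytic curve) that a solution with $z_1,z_2\in\mathbb{Q}_l^{*}$ exists; this is routine but should be said.
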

\begin{proof}
First we consider $l\geq 5$. Suppose $C$ is the homogeneous space given by the equations (\ref{eq22}) and (\ref{eq23}) corresponding to the pair $(1,q)$. Then $C$ is a twist of $E$, and in particular, it has genus $1$. By the Hasse-Weil bound, we have 
$$\# C(\mathbb{F}_{l})\;\geq \;1+l-2\sqrt{l}\;\geq \;2 \qquad \mbox{ for } l\geq 5.$$
We can choose a solution $(z_{1},z_{2},z_{3}) \in \mathbb{F}_{l} \times \mathbb{F}_{l} \times \mathbb{F}_{l}$ such that not all three of them are zero modulo $l$. Now $z_{1} \equiv z_{2} \equiv 0 \pmod l$ implies $l^{2}$ divides $4^{m}p$, a contradiction. Similarly, $z_{1} \equiv z_{3} \equiv 0 \pmod l$ implies $-p \equiv 0 \pmod l^{2}$, contradiction again. Hence for $l \geq 5$, there exists solution to the simultaneous equations (\ref{eq22}) and (\ref{eq23}) in $\mathbb{F}_{l}$ that can be lifted to $\mathbb{Q}_{l}$ via Hensel's lemma. 

For $l=3$, first let us observe that $q \equiv 2 \pmod 3$ always. If $p \equiv 1 \pmod 3$, choose $z_{2} = 0$ and $z_{3} = 1$. Then $z_{1}^{2} = 4^{m}p \equiv 1 \pmod 3$ from first equation and $z_{1}^{2} = q-p \equiv 1 \pmod 3$ from second equation. Hence $z_{1} \not \equiv 0 \pmod 3$ is a solution that can be lifted by Hensel's lemma. If $p \equiv 2 \pmod 3$, choose $z_{2} = 1, z_{3} = 0$. Then $z_{1}^{2} = q + 4^{m}p \equiv 1 \pmod 3$ from first equation and $z_{1}^{2} = -p \equiv 1 \pmod 3$ from second equation. Hence $z_{1} \not \equiv 0 \pmod 3$ is a solution that can be lifted by Hensel's lemma.

Finally for the case $l=2$, choose $z_{2} = 1$ and $z_{3} = 0$. For $m=1$, this turns equation (\ref{eq22}) and equation (\ref{eq23}) into the following;
\begin{align}
    z_{1}^{2} - 5z_{2}^{2} = 4p \equiv 28 \equiv 4 \pmod 8,\\
    z_{1}^{2} - 5z_{3}^{2} = -p \equiv 1 \pmod 8.
\end{align} In both the cases we now have $z_{1}^{2} \equiv 1 \pmod 8$ which by Hensel's lemma can be lifted to $\mathbb{Q}_{2}$. Similarly for $m \geq 2$, one can immediately observe that $z_{1}^{2} \equiv 1 \pmod 8$ again, and hence can be lifted similarly to $\mathbb{Q}_{2}$ via Hensel's lemma. Hence proved.   
\end{proof}
\noindent By Lemmas \ref{rank-lem2}, \ref{Sha-lem4} and \ref{SHA-lem5}, we can conclude that the the $2$-Selmer rank of $E$ lies between $1$ and $3$ as stated in Theorem \ref{mainthm}. For $q=5$, it follows from Lemma \ref{Sha-lem3} that the $2$-Selmer rank is exactly $1$, concluding the proof of Theorem \ref{mainthm}. 

\section{The Mordell-Weil Rank and $\Sha(E/\mathbb{Q})[2]$}

\noindent We have seen that the $2$-Selmer group has rank $1$ in the previous section when $q=5$. By the exact sequence (\ref{selmer}), it follows that either $E(\mathbb{Q})$ has rank $0$ or $\Sha(E/\mathbb{Q})[2]=0$. If we assume the finiteness of $\Sha(E/\mathbb{Q})$ as predicted by Shafarevich, then $\Sha(E/\mathbb{Q})$ must have square order by Cassels-Tate pairing. Therefore, $\Sha(E/\mathbb{Q})[2]$ has to be $0$ i.e. the Mordell-Weil rank of $E$ is $1$ as stated in Corollary \ref{pc}. This also proves the existence of infinitely many Heron triangles with the previously mentioned properties. 

\noindent We include a list of Heronian elliptic curves satisfying the properties mentioned in the Theorem (\ref{mainthm}) with the corresponding rank, $2$-Selmer rank and Shafarevich-Tate group in the below table. The computations have been done in Magma and SageMath software. There is a disparity between the definition of the $2$-Selmer rank defined in this work and that in Magma and SageMath. While we define $\#  \,\mbox{Sel}_2 (E/\mathbb{Q}) = 2^{2+s(E)}$, and refer to $s(E)$ as the $2$-Selmer rank in the following table, $2+s(E)$ will denote the $2$-Selmer rank in Magma or SageMath. \\
\begin{center}
\begin{tabular}{ |c|c|c|c|c|c|c|c|c|c| }
\hline

 $p$ & $q$ &  $r(E)$ & $s(E)$ &  $\Sha(E/\mathbb{Q})[2]$ & $p$ & $q$ & $r(E)$ & $s(E)$ &  $\Sha(E/\mathbb{Q})[2]$\\ 
 \hline
31 & 5 & 1 & 1 & trivial & 47 & 17 & 0 & 2 & no information\\
 \hline
71 & 5 & 1 & 1 & trivial & 127 & 17 & 2 & 2 & no information\\
 \hline
151 & 5 & 1 & 1 & trivial & 191 & 17 & 0 & 2 & no information\\
\hline
\end{tabular}
\end{center}
\vspace{1cm}
\noindent {\it Acknowledgement:} The first author would like to acknowledge DST-SERB for providing the grant through the Start-Up Research Grant (SRG/2020/001937) as well as BITS-Pilani, Hyderabad Campus for providing amenities. The second author would like to acknowledge the fellowship (File No:09/1026(0029)/2019-EMR-I) and amenities provided by the Council of Scientific and Industrial Research, India (CSIR) and BITS-Pilani, Hyderabad. The third author would like to thank MATRICS, SERB for their research grant MTR/2020/000467.

\end{document}